\documentclass[12pt]{amsart}
\usepackage{graphicx, fullpage}
\usepackage{amsmath,amssymb,amsthm,amscd, bm}
\usepackage{fancyhdr}
\usepackage[mathscr]{eucal}
\usepackage{amsfonts}
\usepackage[T1]{fontenc}
\usepackage{xspace}
\usepackage{esint}
\theoremstyle{plain}
\newtheorem {thm}{Theorem}[section]
\newtheorem {lem}[thm]{Lemma}
\newtheorem {cor}[thm]{Corollary}
\newtheorem {defn}[thm]{Definition}
\newtheorem {prop}[thm]{Proposition}
\numberwithin{equation}{section}

\newcommand{\bC}{\mathbb C}
\newcommand{\bE}{\mathbb E}
\newcommand{\bK}{\mathbb K}

\newcommand{\bN}{\mathbb N}

\newcommand{\bR}{\mathbb R}

\newcommand{\var}{\varepsilon}

\newcommand{\GL}{\text{GL}}

\newcommand{\U}{\text{U}}

\newcommand{\vo}{\text{Vol\,}}

\begin{document}
\title{\bf Ellipsoids in pseudoconvex domains II}
\author{L\'aszl\'o Lempert}
\address{Department of  Mathematics,
Purdue University, 150N University Street, West Lafayette, IN
47907-2067, USA}
\subjclass[2020]{32Q28, 32T, 32U05}

%\thispagestyle{empty}
%\end{titlepage}
\abstract
We consider families of pseudoconvex domains $X[s]\subset \bC^n$ parametrized by complex manifolds $S$,
and the maximal hermitian ellipsoids inscribed in each $X[s]$. The paper investigates how these maximal ellipsoids 
and their volume vary with $s$.
\endabstract
\maketitle
\section{Introduction}    % Section 1

Consider the space $\bE_0$ of hermitian ellipsoids\footnote{These are images of the unit ball in $\bC^n$ under invertible
linear transformations of $\bC^n$. In particular, they are centered at 0.}
 in complex Euclidean space $\bC^n$. In \cite{Le26} we studied ellipsoids 
$E\in\bE_0$ of largest volume inscribed in a given pseudoconvex domain $Y\subset\bC^n$,
$0\in Y$---a complex variant of a
problem that John dealt with in \cite{J48}. This ellipsoid of maximal volume will be called maximal ellipsoid (in $Y$). 
Under suitable assumptions we proved the existence and uniqueness of the maximal ellipsoid, and gave a 
characterization. For example, if $Y$ is bounded and strongly pseudoconvex, the maximal ellipsoid exists and is
unique. The current paper is about families of pseudoconvex open sets and how the maximal ellipsoids and their volume vary
in the family.

Let $S$ be a complex manifold, and fix a natural number $n$. By a family of pseudoconvex open sets (over $S$) we mean 
an open set $X\subset S\times\bC^n$ that contains $S\times\{0\}$, such that for each $s\in S$ the fiber
\begin{equation} %1.1
X[s]=\{x\in\bC^n:(s,x)\in X\}\subset\bC^n
\end{equation}
is pseudoconvex.

\begin{defn} %1.1
If $X$ above is a Stein manifold, we call it a pseudoconvex family of pseudoconvex open sets, or just a pseudoconvex
family, for short.
\end{defn}
\begin{thm} %1.2
Let $X\subset S\times\bC^n$ be a pseudoconvex family, and for each $s\in S$ let $v(s)$ denote the volume of the
maximal hermitian ellipsoid in $X[s]$ if this latter exists; in general, let
\[
v(s)=\sup\{\vo E:E\in\bE_0,\, E\subset X[s]\}\in(0,\infty].
\]
Then $-\log v:S\to[-\infty,\infty)$ is a plurisubharmonic function.
\end{thm}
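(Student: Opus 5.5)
The plan is to rewrite $v$ as a supremum over matrices and then apply a minimum principle of Kiselman type. Let $B\subset\bC^n$ be the unit ball and $M_n\cong\bC^{n^2}$ the space of $n\times n$ complex matrices. Every $E\in\bE_0$ is $A(B)$ with $A\in\GL(n,\bC)$, and $\vo A(B)=c_n|\det A|^2$. Set
\[
\Omega=\{(s,A)\in S\times M_n: A(\overline B)\subset X[s]\},
\]
which is open because $\overline B$ is compact. Since $E\subset X[s]$ with $E$ open can be approximated by $(1-\var)E$, whose closure lies in $X[s]$, we get
\[
-\log v(s)=-\log c_n+\inf\{-\log|\det A|^2: A\in\Omega_s\}.
\]
So the theorem says that the fiberwise infimum of the psh function $-\log|\det A|^2$ over the fibers of $\Omega$ is psh on $S$.

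\emph{Step 1: $\Omega$ is pseudoconvex.} Take a continuous psh exhaustion $\psi$ of the Stein manifold $X$ and put
\[
u(s,A)=\max_{|x|\le 1}\psi(s,Ax).
\]
Each function $(s,A)\mapsto\psi(s,Ax)$ is psh, being $\psi$ composed with a holomorphic map. The family is compact and continuous, so $u$ is continuous and psh on $\Omega$. As $(s,A)$ approaches $\partial\Omega$, some point $(s,Ax)$ approaches $\partial X$ or leaves every compact set, so $u\to\infty$ there. Hence $\Omega$ is locally pseudoconvex, and therefore Stein.

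\emph{Step 2: invariance of the fibers.} Each fiber $\Omega_s$ is invariant under $A\mapsto AU$ for $U\in\U(n)$, since $U\overline B=\overline B$. It is also invariant under $A\mapsto A\,\mathrm{diag}(\zeta_1,\dots,\zeta_n)$ for $|\zeta_j|\le1$, since such a diagonal matrix maps $\overline B$ into $\overline B$. The function $-\log|\det A|^2$ changes only by the pluriharmonic term $-\log|\det g|^2$ under $A\mapsto Ag$.

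\emph{Step 3: minimum principle.} Usc of $-\log v$ is routine: if $A(\overline B)\subset X[s_0]$, then $A(\overline B)\subset X[s]$ for all $s$ near $s_0$, so $v$ is lower semicontinuous. It then suffices to check the sub-mean value property along holomorphic discs $f:\overline{\Delta}\to S$, after pulling $\Omega$ back to a pseudoconvex domain in $\Delta\times M_n$.

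To reduce to Kiselman's classical theorem, I would use the polar decomposition $A=PU$ together with the torus action above. Write $A=A_0\,\mathrm{diag}(e^{w_1},\dots,e^{w_n})$ locally, so that $-\log|\det A|^2=-\log|\det A_0|^2-2\sum_j\mathrm{Re}\,w_j$. The fibers are then invariant under $w\mapsto w+i\theta$, and also connected: they are star-shaped under the contractions $A\mapsto tA$, $0<t\le1$, and connected under the $\U(n)$ action. Kiselman's principle gives plurisubharmonicity of $\inf_w$ of a psh function of $\mathrm{Re}\,w$ on a pseudoconvex domain with imaginary-translation-invariant, convex-in-$\mathrm{Re}\,w$ fibers. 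The remaining infimum over the non-abelian part, the "unitary direction" of $A$, is not handled by Kiselman.

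I expect this to be the main obstacle. The fibers are invariant under the compact group $\U(n)$, but the fiberwise infimum is really over $\GL(n)/\U(n)$, which is a non-abelian symmetric space. For that I would first try a generalized minimum principle for domains invariant under a compact group $K$ whose complexification acts (in the spirit of Loeb and Berndtsson's complex Prékopa theorem). The relevant convexity is geodesic convexity of $\{P>0: P\overline B\subset X[s]\}$ along $t\mapsto P_0^{1/2}e^{tH}P_0^{1/2}$. The function $-\log\det P^2$ is affine along such geodesics.

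If that fails, the fallback is direct. Given a disc $f$, solve a Riemann–Hilbert type problem producing a holomorphic $A(\zeta)\in\Omega_{f(\zeta)}$ with $|\det A(\zeta)|^2$ nearly $v(f(\zeta))$ on $\partial\Delta$. One would use $\mathrm{diag}$-twists by outer functions to match boundary moduli, and then apply the maximum principle to the holomorphic function $\det A(\zeta)$.
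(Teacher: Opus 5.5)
\textbf{Verdict: there is a genuine gap.} Your preliminary reductions are fine. $\Omega$ is open, $u$ is a continuous plurisubharmonic exhaustion of it, $v$ is lower semicontinuous, and the theorem is equivalent to plurisubharmonicity of $s\mapsto\inf\{-\log|\det A|^2: A\in\Omega_s\}$. But that statement is the whole theorem, and Step 3 does not prove it.

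Kiselman's principle can at best handle the infimum over a single orbit $A_0\,\mathrm{diag}(e^{w_1},\dots,e^{w_n})$. That is one $n$-dimensional flat of the $n^2$-dimensional space $\GL_n(\bC)/\U_n$. The remaining infimum over $A_0$ is an infimum of a family of plurisubharmonic functions, which is not plurisubharmonic in general. The non-abelian minimum principle you hope for is neither stated nor proved. The geodesic convexity of $\{P>0:P\overline B\subset X[s]\}$ it would need is itself a nontrivial theorem (Theorem 1.3 of [Le26]), not an available input.

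Your fallback is the right idea; it is essentially the paper's route. But it lacks the ingredients that make it work.

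(i) You need a holomorphic $H:\Delta\to\GL_n(\bC)$ such that $H(\zeta)B$ extends continuously to a \emph{prescribed} continuous map $\partial\Delta\to\bE_0$. Equivalently, $HH^*$ must extend continuously to a given positive definite matrix function on $\partial\Delta$. This is a non-commutative factorization problem of Wiener--Masani/Birkhoff type; the paper gets it as Theorem 2.1(a), from [Le17, Theorem 3.1]. Diagonal twists by outer functions cannot replace it. The ellipsoids $A_0\,\mathrm{diag}(|h_1|,\dots,|h_n|)B$ only sweep out a flat, so the boundary ellipsoids cannot follow the varying shapes and orientations of nearly maximal ellipsoids in $X[f(\zeta)]$. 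Also, the maximum principle for $\det A$ gives the inequality in the wrong direction. What you need is that $\det H$ has no zeros, so that $\log|\det H|^2$ is harmonic; the $\GL_n(\bC)$-valued factorization supplies this.

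(ii) Containment on $\partial\Delta$ does not by itself give $H(\zeta)B\subset X[f(\zeta)]$ for $\zeta\in\Delta$. A pseudoconvex domain need not contain a disc whose boundary circle it contains: take $\bC^2\setminus\{z_1=0\}$ and $\zeta\mapsto(\zeta,0)$. A continuity argument is required. For example, deform through $tH$, $0<t\le1$, using that $\Omega_s$ is invariant under $A\mapsto tA$. Then bound your exhaustion $u$ along these discs by the maximum principle, as in the paper's proof of Theorem 2.1(b).

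(iii) You need continuous, nearly maximal boundary data, which is not automatic since $v$ is only lower semicontinuous. The paper first treats strongly pseudoconvex $X$ with $\overline X\to S$ proper, where maximal ellipsoids are unique and depend continuously on $s$ (Lemma 3.3). It then passes to general $X$ by exhaustion and Proposition 3.1.
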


A special case---when the $X[s]$ above are unit balls of norms on $\bC^n$---already arose in Rochberg's work on 
interpolation, see \cite[Theorems 3.1, 4.2]{Ro84}. After the proof of Proposition 4.3 we explain how Rochberg's assumptions
and ours are related.

Consider now an open $Y\subset\bC^n$, and for each $y\in Y$ let 
\[
w(y)=\sup\{\vo E:E\in\bE_0,\,E+y\subset Y\}\in(0,\infty].
\]
\begin{cor} %1.3
If $Y$ is pseudoconvex, then $-\log w$ is plurisubharmonic on $Y$.
\end{cor}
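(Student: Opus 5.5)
The plan is to deduce Corollary 1.3 from Theorem 1.2 by taking the family of translates of $Y$, parametrized by $S=Y$ itself. Define
\[
X=\{(s,x)\in Y\times\bC^n:\ x+s\in Y\}\subset Y\times\bC^n .
\]
We check the hypotheses of Theorem 1.2 one at a time.

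First, $X$ is open, since it is the preimage of the open set $Y\times Y$ under the continuous map $(s,x)\mapsto(s,x+s)$. It contains $Y\times\{0\}$, because $0+s=s\in Y$ for every $s\in Y$. For each $s$, the fiber is $X[s]=Y-s$, a translate of $Y$, so it is pseudoconvex.

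Second, $X$ must be Stein, and this is the only point where some care is needed. The map $\Phi(s,x)=(s,x+s)$ is a biholomorphism of $\bC^n\times\bC^n$, with inverse $(s,z)\mapsto(s,z-s)$. It carries $X$ onto $Y\times Y$. The product of two pseudoconvex domains in $\bC^n$ is pseudoconvex in $\bC^{2n}$: if $\phi$ is a continuous plurisubharmonic exhaustion of $Y$, then $\max(\phi(s),\phi(z))$ is one for $Y\times Y$. Since pseudoconvex domains in $\bC^{2n}$ are Stein, $Y\times Y$ is Stein, and hence so is its biholomorphic image $X$. Thus $X$ is a pseudoconvex family over the complex manifold $S=Y$.

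Finally, we identify $v$ with $w$. For $E\in\bE_0$ we have $E\subset X[s]=Y-s$ if and only if $E+s\subset Y$. So the two sets of ellipsoids over which the suprema are taken coincide, and $v(s)=w(s)$ for every $s\in Y$. Theorem 1.2 now says that $-\log v=-\log w$ is plurisubharmonic on $Y$, which is the claim.
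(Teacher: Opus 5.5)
Your proposal is correct and is exactly the paper's argument: apply Theorem 1.2 with $S=Y$ and $X=\{(s,x): s,\ x+s\in Y\}$, and observe that $v=w$. The one addition is that you verify $X$ is Stein via the biholomorphism $(s,x)\mapsto(s,x+s)$ onto $Y\times Y$, a step the paper leaves implicit.
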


Indeed, this follows from Theorem 1.2 upon setting 
\[
S=Y,\quad X=\{(s,x): s,x+s\in Y\}\subset S\times\bC^n,
\]
since $v$ of the theorem and $w$ of the corollary agree.---It is worthwhile to compare Corollary 1.3 with one definition of
pseudoconvexity, namely, that $Y\ni y\mapsto-\log\text{ dist}(y,\partial Y)$ is plurisubharmonic. This says that if
instead of ellipsoids centered at $y\in Y$ we consider the volume $w'(y)$ of the maximal ball $\subset Y$ centered
at $y$, then $-\log w'$ is plurisubharmonic.

Theorem 1.2 is analogous to the Brunn--Minkowski inequality in its multiplicative formulation; the analogy is even 
closer with Berndtsson's theorems. By \cite[Theorem 1.4]{B98}, if the fibers of a pseudoconvex family
$X\subset S\times\bC^n$ are balanced\footnote{i.e., $x\in X[s]$ and $\zeta\in\bC$, $|\zeta|\le1$ imply $\zeta x\in X[s]$}
and $u(s)$ is the volume of the fiber $X[s]$ itself, then $-\log u$ is plurisubharmonic. Related results are due to
Cordero--Erausquin, Hamano, Maitani, Yamaguchi, and Berndtsson himself \cite{B98, B06, CE02, CE05, HY04, MY04}.
In particular, \cite[Theorem 1.1]{B06} provides other ways to associate numbers with domains in $\bC^n$ that,
in pseudoconvex families, vary in a plurisubharmonic way.

There are more results of similar flavor, both in complex and real geometry. Prior to the work of Berndtsson and 
Cordero--Erausquin, Yamaguchi \cite{Y89} discovered  that in certain families $X\subset\bC^n$, including
pseudoconvex families, the so called Robin constant of $X[s]$ is a logarithmically plurisubharmonic function of $s$.
Rashkovskii \cite[Proposition 4.2]{Ra16} proved that in certain families $L_t\subset\bC^n$, $t\in[0,1]$, of compact sets the 
Monge--Amp\`ere capacity of $L_t$ is a convex function of $t$. For Brunn--Minkowski--type property of a variety of
characteristics of convex domains in $\bR^n$, see e.g. the works of Colesanti and Salani \cite{C05, S05, S12}.

In light of Theorem 1.2 it is natural to ask whether in a pseudoconvex family $X\subset S\times\bC^n$ 
the maximal ellipsoids $E_s\in\bE_0$ in the fibers $X[s]$
themselves, and not only their volumes, have some plurisubharmonicity property. Do they constitute the fibers
of a pseudoconvex family $F\subset S\times\bC^n$? If this were so, Theorem 1.2 would be a consequence of
Berndtsson's theorem \cite[Theorem 1.4]{B98}. One has to be careful even with posing the question, though: the
maximal ellipsoids are not always unique, even if the fibers $X[s]$ are bounded. But suppose that each fiber is bounded
and strongly pseudoconvex. Then the maximal ellipsoids $E_s\in\bE_0$ inscribed in $X[s]$ are unique 
\cite[Theorem 1.2]{Le26}, form the fibers of a family 
$F=\bigcup_{s\in S}\big(\{s\}\times E_s\big)\subset S\times\bC^n$,
and the question whether $F$ has to be a pseudoconvex family is meaningful. However, the answer is in the
negative: if $n\ge 2$, $F$ need not be pseudoconvex. In fact, this failure is not just a property of maximal ellipsoids.
The same arises whenever one associates ellipsoids with domains in $\bC^n$ in a functorial way:

\begin{thm} %1.4
Fix $n=2,3,\dots$, and let $\Psi$ be a map from the space $\bK$ of strongly convex, smoothly bounded open 
neighborhoods of $0\in \bC^n$ to $\bE_0$. Suppose $\Psi$ is $\GL_n(\bC)$ equivariant:
\[
\Psi(AY)=A(\Psi Y)
\]
whenever $Y\in\bK$ and $A:\bC^n\to\bC^n$ is linear and invertible. Then there is a pseudoconvex family
$X\subset S\times\bC^n$ whose fibers are in $\bK$, but
\begin{equation*}
\bigcup_{s\in S}\big(\{s\}\times\Psi(X[s])\big)\subset S\times \bC^n
\end{equation*}
is not a pseudoconvex family.
\end{thm}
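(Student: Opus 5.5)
The idea is to exploit equivariance to pin down $\Psi$ to second order near a highly symmetric domain, and then to choose a pseudoconvex family of perturbations along which the associated ellipsoids cannot form a Stein family. First, families of the form $X[s]=A(s)Y$ with $A:S\to\GL_n(\bC)$ holomorphic are useless. Indeed, $X$ is then biholomorphic to $S\times Y$, and by equivariance the ellipsoid family is $\bigcup_s\{s\}\times A(s)\Psi(Y)$, which is biholomorphic to $S\times\Psi(Y)$ and hence pseudoconvex. So the family must genuinely deform the shape of the fibers.

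The basic constraint comes from symmetry. If $Y\in\bK$ is invariant under a subgroup $G\subset\GL_n(\bC)$, then $\Psi(Y)$ is $G$-invariant. Thus $\Psi(\text{ball})=r B$ for some $r>0$, and a $U(n)$-invariant $Y$ is sent to a ball. Similarly, if $Y$ is Reinhardt, then $\Psi(Y)=\{\sum a_j|x_j|^2<1\}$ is diagonal. In the latter case, equivariance under diagonal matrices gives $\log a_j(DY)=\log a_j(Y)-2\log|d_j|$, and equivariance under coordinate permutations permutes the $a_j$. For a family of diagonal ellipsoids $\{(s,x):\sum a_j(s)|x_j|^2<1\}$, pseudoconvexity is equivalent to each $\log a_j$ being plurisubharmonic. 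One direction follows by restricting to the axis $x_j$, the other because $\log\sum e^{\log a_j+2\log|x_j|}$ is plurisubharmonic. So along a Reinhardt family, everything reduces to convexity properties of the functions $\log a_j$ in $\operatorname{Re}s$ when the family is a tube over convex sets of log-images. My first attempt is to find two pseudoconvex Reinhardt families whose fibers coincide pairwise up to diagonal maps and permutations but which impose incompatible conditions on $\log a_j$: one forcing convexity, the other forcing strict concavity of the same function. I expect the Reinhardt class alone might be too rigid (the conditions I can extract, such as ``$g$ convex and nonincreasing'', look self-consistent), so I would combine it with a non-Reinhardt deformation.

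The main plan is therefore a perturbative computation at the ball $B$. Take $S$ a disc and $X[s]=\{x:\rho(x)+ \var\,\phi(s,x)<1\}$, where $\rho=|x|^2$ and $\phi$ is chosen so that $X$ is pseudoconvex; in the strongly convex regime this is guaranteed to second order in $\var$ by a Levi-form computation. Write $\Psi(X[s])=\{x^*H(s)x<1\}$. By $U(n)$-equivariance, the first variation $\delta H$ is a $U(n)$-equivariant linear map from perturbations $\phi$ to hermitian matrices. Schur's lemma forces it to factor through the $(1,1)$-component $\sum c_{j\bar k}x_j\bar x_k$ of $\phi|_{\partial B}$: we get $\delta H=\alpha\,(\text{traceless part})+\beta\,(\text{trace part})\cdot I$ for constants $\alpha,\beta$ depending only on $\Psi$. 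The scaling and $\GL_n$-equivariance computed on $A(s)B$ should then force $\alpha$ to be nonzero and in fact determine it, since $\phi$ coming from a linear deformation must reproduce $A(s)$.

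Pseudoconvexity of the ellipsoid family means $s\mapsto\log(\text{Minkowski functional})$ is plurisubharmonic, i.e. $-\log\det$-type and directional conditions $i\partial\bar\partial_{s}\log(v^*H(s)v)\ge0$ jointly in $(s,v)$. I will pick $\phi(s,x)=\operatorname{Re}(s^2)\,|x_1|^2$-type terms, or more generally a pseudoconvex family whose $(1,1)$ part in $x$ has $s$-dependence that is harmonic but not pluriharmonic-in-the-right-combination. When $n\ge2$, the traceless component then produces, via $\alpha$, a term $\operatorname{Re}(\dots)$ in $\log H$ whose mixed $s,x$ Levi form is indefinite, while the remaining terms in the fiber are compensated for $X$ by the non-$(1,1)$ harmonics of $\phi$ (to which $\Psi$ is blind to first order). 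The hardest step is precisely this one: arranging, at second order in $\var$, that $X$ is pseudoconvex thanks to higher harmonics, while the ellipsoid family sees only the $(1,1)$ part and fails pseudoconvexity. Here $n\ge2$ is needed so that the traceless part is nonzero; for $n=1$ everything is a disc and the statement fails. If the second-order bookkeeping becomes unwieldy, I would first try the Reinhardt version of the same idea, with fibers $\{\sum|x_j|^2+\var\,\phi_s(|x_1|,\dots,|x_n|)<1\}$, where only the diagonal $a_j$ need to be controlled.
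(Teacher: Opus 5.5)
\paragraph{A genuine gap, in two parts.}

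The first part is regularity. The theorem allows an \emph{arbitrary} $\GL_n(\bC)$-equivariant map $\Psi:\bK\to\bE_0$, with no continuity assumed, let alone differentiability. Your main plan needs considerably more than that.
\begin{itemize}
\item The first variation $\delta H$ at the ball has to exist and be linear.
\item It has to be continuous in the perturbation $\phi$. Schur's lemma only kills the finite-dimensional isotypic pieces $H^{p,q}$ with $(p,q)\ne(0,0),(1,1)$; without continuity you cannot discard the higher harmonics of a general $\phi$.
\item The remainder has to be $o(\var)$ uniformly in $s$. Otherwise a first-order failure of log-subharmonicity of $s\mapsto v^*H(s)v$ does not become an actual failure of pseudoconvexity of $\Psi X$.
\end{itemize}
None of this follows from the hypotheses. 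Even if you grant, for contradiction, that $\Psi$ preserves pseudoconvexity of families, what you can naturally extract is monotonicity under inclusion. That gives a Lipschitz-type continuity under dilations, but not differentiability. So at best your route proves the theorem for differentiable $\Psi$.

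The second part is that the decisive step is not carried out, and the concrete choice you suggest works at the wrong order. Taken by itself, $\phi=\operatorname{Re}(s^2)|x_1|^2$ makes the fibers ellipsoids. On ellipsoids equivariance forces $\Psi(E)=rE$, so no contradiction can arise. That family is not even pseudoconvex, since $\log(1+\var\operatorname{Re}s^2)$ is superharmonic.

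More generally, if the $(1,1)$-part of $\phi$ is harmonic in $s$, all first-order effects vanish. At second order equivariance does not determine $\Psi$: its second variation may couple pairs of higher harmonics, because $H^{p,q}\otimes H^{q,p}$ contains the adjoint representation. The inertia ellipsoid $\{x:\ x^*M(Y)^{-1}x<1\}$ with $M(Y)=\frac{1}{\vo Y}\int_Y xx^*$ actually does this. So your claim that $\Psi X$ ``sees only the $(1,1)$ part'' fails exactly where you need it.

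If you do assume differentiability, equivariance on ellipsoids pins down $\delta H(\phi)=r^{-2}Q(\phi|_{\partial B})$, where $Q(f)=n(n+1)\int f\,xx^*\,d\sigma-n\bigl(\int f\,d\sigma\bigr)I$. For $n\ge2$ this map is not positivity preserving. A first-order counterexample then comes from $\phi=|s|^2(g+\delta)$ with $g\ge0$ concentrated near $e_1$ and $\delta>0$ small.

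\paragraph{How the paper avoids this.} It uses a zeroth-order argument that needs no regularity of $\Psi$.
\begin{itemize}
\item The radial family $\{|s|+u(x)<2\}$, together with Proposition 4.3 and the maximum principle, shows that preserving pseudoconvexity forces $\overline{Y_1}\subset Y_2\Rightarrow\Psi(Y_1)\subset\Psi(Y_2)$ (Lemma 4.2).
\item A $Y\in\bK'$ close to the bidisc and invariant under the torus and the coordinate swap has $\Psi(Y)=rB$.
\item A linear $A$ with $\overline{AY}\subset Y$ but $|A(\xi,0)|>|\xi|$ then gives $A(rB)=\Psi(AY)\subset rB$, which is impossible.
\end{itemize}
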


Equivariance means that the association $Y\mapsto\Psi Y$ depends only on the vector space structure of
$\bC^n$, and not on the choice of the coordinates.---In section 4 we will prove a slightly stronger result, with 
$\bK$ replaced by $\bK'\subset\bK$, the set of
balanced domains in $\bK$. 
%($Y\subset \bC^n$ is balanced if $y\in Y$ and $\zeta\in\bC$, $|\zeta|\le 1$ imply $\zeta y\in Y$).

\section{An $\bE_0$-valued Dirichlet problem} %section 2

This section prepares the proof of Theorem 1.2.
Fix an ellipsoid in $\bE_0$, say, the unit ball $B$ in $\bC^n$. Any $E\in\bE_0$ is of form $TB$, where $T$ is a 
positive self adjoint operator on $\bC^n$, uniquely determined by $E$. Positive self adjoint operators constitute an open
subset in the vector space of all self adjoint operators, and we use the association $E=TB\mapsto T$ to endow
$\bE_0$ with the structure of a smooth manifold. Let $\Delta\subset\bC$ denote the unit disc.

\begin{thm} %2.1
Consider a continuous map $f:\partial\Delta\to\bE_0$.\\
\phantom{aa} (a) There is a unique holomorphic $H:\Delta\to\GL_n(\bC)$ such that $H(0)$ is positive self adjoint and
\begin{equation} %2.1
F(s)=\begin{cases}H(s)B &\text{if } s\in\Delta\\
         f(s) &\text{if } s\in\partial\Delta\end{cases}
\end{equation}  
defines a continuous map $F:\overline\Delta\to\bE_0$.\\
\phantom{aaa}(b) Suppose $S\subset\bC$ is a neighborhood of $\overline\Delta$ and $X\subset S\times\bC^n$ 
is a pseudoconvex
family. If $f(s)\subset X[s]$ for $s\in\partial\Delta$, then $F$ of (2.1) satisfies $F(s)\subset X[s]$ for 
$s\in\overline\Delta$.
\end{thm}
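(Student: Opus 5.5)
The plan is to reduce everything to positive matrix functions on the circle. Write $f(s)=T(s)B$ with $T(s)$ positive self adjoint and continuous in $s$. For $H\in\GL_n(\bC)$ we have $HB=TB$ if and only if $HH^*=T^2$. So part (a) asks for a holomorphic $H:\Delta\to\GL_n(\bC)$ with two properties: $H(0)>0$, and $H(s)H(s)^*\to W(s_0):=T(s_0)^2$ as $s\to s_0\in\partial\Delta$. Continuity of $F$ inside $\Delta$ is automatic. This is a matrix outer (Wiener--Masani) factorization problem. The only delicate point is continuity up to the boundary, since for merely continuous $W$ the factor $H$ itself need not extend continuously.

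\emph{Comparison principle.} This is the key tool. Suppose $H_1,H_2$ are holomorphic on $\Delta$ with $H_jH_j^*$ extending continuously to $\overline\Delta$, and $H_1H_1^*\le c^2H_2H_2^*$ on $\partial\Delta$. Then $s\mapsto\log\|H_2(s)^{-1}H_1(s)\|$ is subharmonic on $\Delta$. Moreover $\|H_2^{-1}H_1\|\le c'$ exactly when $H_1H_1^*\le c'^2H_2H_2^*$, so its boundary limits are controlled by the continuous boundary data. The maximum principle then gives $H_1H_1^*\le c^2H_2H_2^*$ throughout $\Delta$.

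\emph{Existence.} Approximate $W$ uniformly by positive definite trigonometric polynomials, or real analytic $W_k$. For these the classical factorization theory supplies $H_k$, holomorphic and invertible on a neighborhood of $\overline\Delta$, with $H_kH_k^*=W_k$ on $\partial\Delta$. Normalize $H_k(0)>0$ by right multiplication with a unitary. Since $W_k\le(1+\var_k)W_l$ and $W_l\le(1+\var_k)W_k$ with $\var_k\to0$, the comparison principle shows that $H_kH_k^*$ is uniformly Cauchy on $\overline\Delta$. The factors $H_k$ are also locally uniformly bounded, so a subsequence converges to a holomorphic $H$. We get $H$ invertible because the inverses $H_k^{-1}$ are bounded by the same comparison, and $H(0)>0$ by the normalization. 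Finally, $HH^*$ is the uniform limit of continuous functions on $\overline\Delta$ equal to $W_k$ on the boundary, which gives the required continuity.

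\emph{Uniqueness.} Let $H_1,H_2$ be two solutions. Apply the comparison principle with $c=1$ in both directions. Then $U=H_2^{-1}H_1$ satisfies $\|U\|\le1$ and $\|U^{-1}\|\le1$ on $\Delta$, so $U(s)$ is unitary for every $s$. A holomorphic map into the unitary group is constant, since $U^*=U^{-1}$ is both holomorphic and antiholomorphic. Then $H_1(0)=H_2(0)U$ with both $H_j(0)>0$, and uniqueness of the polar decomposition gives $U=I$.

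For (b), use the Kontinuit\"atssatz. Since $X$ is Stein, it is Hartogs pseudoconvex. Fix $r<1$. Continuity of $F$ on $\overline\Delta$ gives that $K_r=\{(s,x):s\in\partial\Delta,\ x\in r\overline{f(s)}\}$ is a compact subset of $X$. Consider the analytic discs $D_{t,v}:\overline\Delta\ni s\mapsto(s,tH(s)v)$ for $t\in[0,r]$ and $|v|\le1$; their boundaries lie in $K_r$ for every such $t,v$. Let $I$ be the set of $t\in[0,r]$ for which all discs $D_{t',v}$ with $t'\le t$ lie in $X$.
\begin{itemize}
\item $I$ contains a neighborhood of $0$, because $X\supset S\times\{0\}$ is open and $H$ is bounded near $\overline\Delta$ on compacta. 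Strictly, $H$ is bounded only locally, so I would first work on the discs $\rho\Delta$, $\rho<1$, using $F(\rho\,\cdot)$ and then let $\rho\to1$.
\item $I$ is open, because $X$ is open.
\item $I$ is closed by the continuity principle: the boundaries stay in the fixed compact $K_r$, so the discs stay in a compact subset of $X$.
\end{itemize}
Hence $I=[0,r]$, so $rF(s)\subset X[s]$ for all $r<1$, and therefore $F(s)\subset X[s]$. The main obstacle is the existence half of (a), namely the boundary continuity of $HH^*$. The approximation-plus-comparison argument is what I would try first.
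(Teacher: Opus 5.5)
Your proposal is correct. It follows the paper's overall architecture: (a) is reduced to factoring $T^2=HH^*$ via $H(s)B=(H(s)H(s)^*)^{1/2}B$, and (b) is proved by a continuity argument using a plurisubharmonic exhaustion and the maximum principle. Both halves, however, are carried out differently.

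\textbf{Part (a).} The paper does not prove the factorization. It invokes [Le17, Theorem 3.1], applied to $T^{-2}$ and then inverted, for existence and uniqueness of a holomorphic $H$ with $HH^*$ continuous up to $\partial\Delta$. You prove that factorization directly in finite dimensions:
\begin{itemize}
\item a comparison principle from the subharmonicity of $\log\|H_2^{-1}H_1\|$, whose boundary behaviour is controlled because $\|H_2^{-1}H_1\|^2=\sup_u\langle P_1u,u\rangle/\langle P_2u,u\rangle$ is continuous in the pair $P_1,P_2>0$;
\item Fej\'er--Riesz factors of approximating trigonometric polynomials;
\item uniform Cauchy estimates plus Montel;
\item uniqueness because $H_2^{-1}H_1$ is a holomorphic unitary-valued map, hence constant.
\end{itemize}
This is self-contained but genuinely finite dimensional, since it uses Montel. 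The citation instead gives the Hilbert space version mentioned after the proof. More importantly for the paper, it also supplies the stability statement [Le17, Corollary 3.3].

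\textbf{Part (b).} The paper deforms arbitrary boundary data $f_t$ from $rB$ to $f$. It therefore needs that stability result to know that $F_t$ depends continuously on $t$, and it also needs a homotopy with $\overline{f_t(s)}\subset X[s]$. Your radial homotopy $t f$ avoids both. Its solutions are simply $tH$, and it stays inside $X[s]$ on $\partial\Delta$ because ellipsoids are balanced: $t\overline{f(s)}\subset r\overline{f(s)}\subset f(s)$ for $t\le r<1$.

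\textbf{One point to write out carefully.} It is the one you flag yourself. $H$ is in fact bounded on all of $\Delta$, since $\|H\|^2=\|HH^*\|$, but it need not have boundary values, so the discs $D_{t,v}$ cannot literally be defined on $\overline\Delta$. Run the argument on $\overline{\rho\Delta}$, with $\rho$ chosen (depending on $r$) so close to $1$ that
\[
\{(s,x):|s|=\rho,\ x\in r\overline{F(s)}\}\subset X.
\]
This follows from the continuity of $F$ on $\overline\Delta$ together with the compactness of $K_r\subset X$. This compact set then plays the role of $K_r$ in your closedness step. Finally let $\rho\to1$ and then $r\to1$.
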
      

Part (b) is analogous to \cite[Theorem1.3]{Le26}, that concerns geodesic segments in the symmetric space
$\bE_0\approx \GL_n(\bC)/\U_n$: if the endpoints of a geodesic are inscribed in a pseudoconvex 
domain $Y\subset\bC^n$, then 
the same holds for all points of the geodesic.
\begin{proof}(a) If $f$ is somewhat smooth, the solution to our problem is immediately obtained from a certain
matrix factorization, related to Birkhoff's, see e.g. \cite[Theorem (8.1.1)]{PS88}. From here the case of a 
continuous $f$ can be obtained by
approximation. The statement also follows from a more general---and more difficult--- result of Coifman and Semmes
\cite[Section 17]{CS93}. Here we prove through a simple reduction to \cite[Theorem 3.1]{Le17}.

We can write $f(s)=T(s)B$, $s\in\partial\Delta$, with a continuous map $T:\partial\Delta\to\GL_n(\bC)$ whose values 
are positive self adjoint operators.  By \cite[Theorem 3.1]{Le17} there is a unique holomorphic map
$H:\Delta\to\GL_n(\bC)$ such that $H(0)$ is positive self adjoint and
\begin{equation} %2.2
P(s)=\begin{cases} H(s)H(s)^* &\text{if }s\in\Delta\\
               T(s)^2 &\text{if } s\in\partial\Delta\end{cases}
\end{equation}
defines a continuous $P:\overline\Delta\to GL_n(\bC)$. (\cite{Le17} provides a factorization of form $K(s)^*K(s)$
in the first line of (2.2), but applying such a factorization to $T(s)^{-2}$ we obtain the $H(s)=K(s)^{-1}$ of (2.2).)

Since $H=P^{1/2}(P^{-1/2}H)$, and the second factor here $=(HH^*)^{-1/2}H$ is unitary, 
$F(s)=H(s)B=P(s)^{1/2}B$
for $s\in\Delta$, and the continuity of $P$ implies the continuity of $F$. Thus $H$ solves the problem considered.

To see uniqueness, start with an $H$ as in the theorem. 
Defining $P$ by (2.2), and noting again that $H(s)B=P(s)^{1/2}B$ for
$s\in\Delta$, we see that the continuity of $F$ implies the continuity of $P$. Hence uniqueness in \cite[Theorem 3.1]{Le17}
implies uniqueness in our current problem.

(b) The proof is similar to that of \cite[Theorem 1.3]{Le26}. We start by recording that the solution $F$ in part (a) is stable:
If $f_j:\partial \Delta\to\bE_0$, $j\in\bN$, converge uniformly to $f:\partial\Delta\to\bE_0$ as $j\to\infty$, 
then the corresponding
$F_j:\overline\Delta\to\bE_0$ also converge uniformly to $F:\overline\Delta\to\bE_0$. This follows from the 
corresponding stability property of (2.2), \cite[Corollary 3.3]{Le17}.

We will prove (b) by the continuity method. Assume first that not only $f(s)$, but even its closure $\overline{f(s)}$ is 
inscribed in $X[s]$, $s\in\partial\Delta$. Choose $r>0$ so that each $X[s]$, $s\in\overline\Delta$, contains 
$r\overline B$, and construct a homotopy $f_t:\partial\Delta\to\bE_0$, $0\le t\le 1$
so that 
\[
[0,1]\times\partial\Delta\ni(t,s)\mapsto f_t(s)\in\bE_0
\]
is continuous, $f_0\equiv rB$, $f_1=f$, and $\overline{f_t(s)}\subset X[s]$ for all $t\in[0,1]$, $s\in\partial\Delta$. For each $t$ let
$H_t:\Delta\to\GL_n(\bC)$ be holomorphic such that $H_t(0)$ is positive self adjoint and
\[
F_t(s)=\begin{cases} H_t(s)B &\text{if } s\in\Delta\\
           f_t(s) &\text{if } s\in\partial\Delta\end{cases}
 \]
 defines a continuous $F_t:\overline\Delta\to\bE_0$. Our initial observation implies that $F_t(s)\in\bE_0$ depends 
 continuously on $t,s$. Let
 \[
 \Theta=\{t\in[0,1]: F_t(s)\subset X[s]\text{ for all } s\in\overline\Delta\}.
 \]
 For example, $0\in\Theta$, as $H_0\equiv rI$ fits the bill.
 
 To show that $\Theta\subset[0,1] $ is both open and closed, take a continuous plurisubharmonic exhaustion function $\phi:X\to\bR$. Let
 \[
 C=\bigcup\big\{\{s\}\times\overline{f_t(s)}:s\in\partial\Delta,\, t\in[0,1]\big\}\subset X\quad\text{and}\quad M=\sup _C \phi.
 \]
 Since $C$ is compact, $M<\infty$. Suppose $\theta\in\Theta$. With any $b\in B$, the function $\psi$ given by
 \[
 \psi(s)=\phi\big(s,H_\theta(s)b\big),\qquad s\in\Delta,
 \]
 is subharmonic. By the maximum principle
 \[
 \psi(s)\le\lim_{\rho\nearrow 1} \max_{\partial(\rho\Delta)} \psi\le
       \varlimsup_{\rho\nearrow 1}\max_{\sigma\in\partial(\rho\Delta)}\max_{F_\theta(\sigma)} \phi(\sigma,\cdot).
 \]
 As the last maximum here is a continuous function of $\sigma\in\overline\Delta$,
 \[
 \phi\big(s,H_\theta(s)b\big)=
 \psi(s)\le\max_{\sigma\in\partial\Delta}\max_{F_\theta(\sigma)}\phi(\sigma,\cdot)\le M,\qquad s\in\Delta
 \]
 follows. Since $b\in B$ was arbitrary,
 \[
 \bigcup_{s\in\Delta}\big(\{s\}\times F_\theta(s)\big)=\bigcup_{s\in\Delta}\big(\{s\}\times H_\theta(s)B\big)
 \subset\big\{(s,x)\in X: \phi(s,x)\le M\big\}=C_1,
 \]
 this latter being a compact subset of $X$. On the one hand this shows that if $t$ is sufficiently close to $\theta$, by
 continuity
 $F_t(s)\subset X[s]$ for all $s\in\overline\Delta$, and $\Theta$ is open. On the other, if $t$ is in the closure of $\Theta$,
then $F_t(s)\subset\{x\in\bC^n:(s,x)\in C_1\}\subset X[s]$, so that $t\in\Theta$ and $\Theta$ is
 closed.
 
 We conclude that $\Theta=[0,1]$. In particular $F(s)=F_1(s)\subset X[s]$ for $s\in\overline\Delta$.
 
 In the general case when for $s\in\partial\Delta$ only $f(s)$, rather than its closure, is known to be inscribed in $X[s]$,
 take $\lambda\in(0,1)$ and replace $f(s)$ by $\lambda f(s)\in\bE_0$. Applying what we just proved for $\lambda f$
 and letting $\lambda\to 1$ we obtain (b) in full generality.
\end{proof}

Theorem 2.1 remains true if $\bC^n$ is replaced by a general Hilbert space, because \cite{Le17}, on which the proof
was based, works in that generality. One can replace the disc as well, by any bordered Riemann surface, although then
$F$, the extension of the given $f$, can be  represented through holomorphic functions $H$ only locally. This
follows from Wu's work, \cite[Theorem 1.1]{W20}.

\section{The proof of Theorem 1.2} %section 3

We first investigate to what extent the maximal inscribed hermitian ellipsoid in an open
$Y\subset\bC^n$, and its volume, are stable under perturbation of $Y$. Assuming $0\in Y$, set
\begin{equation} %3.1
V(Y)=\sup\{\vo E:E\in\bE_0, E\subset Y\}\in (0,\infty].
\end{equation}
\begin{prop} %3.1
If $Y_1\subset Y_2\subset\dots\subset\bC^n$ is an increasing sequence of open sets containing $0$ and 
$Y=\bigcup_{j=1}^\infty Y_j$, then $V(Y)$ is the increasing limit of $V(Y_j)$.
\end{prop}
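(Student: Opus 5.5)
Since $Y_j\subset Y_{j+1}\subset Y$, monotonicity of $V$ (any $E\subset Y_j$ also lies in $Y_{j+1}$ and in $Y$) shows that $V(Y_j)$ is nondecreasing and $\lim_j V(Y_j)\le V(Y)$. The whole content is therefore the reverse inequality $V(Y)\le\lim_j V(Y_j)$, and the plan is a compactness argument: an ellipsoid in $Y$ can be shrunk slightly so that its closure becomes a compact subset of $Y$, and a compact set inside the increasing union of open sets already lies in a single $Y_j$.

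Concretely, take any number $c<V(Y)$; this also covers the case $V(Y)=\infty$. By the definition (3.1) there is $E\in\bE_0$ with $E\subset Y$ and $\vo E>c$. Write $E=TB$ with $T$ positive self adjoint. For $\lambda\in(0,1)$ the ellipsoid $\lambda E=\lambda TB$ belongs to $\bE_0$, and its closure $\lambda T\overline B$ is compact and contained in $TB=E\subset Y$, since $\lambda\overline B\subset B$. Moreover $\vo(\lambda E)=\lambda^{2n}\vo E$, so we may fix $\lambda$ close to $1$ with $\vo(\lambda E)>c$.

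The open sets $Y_j$ increase and cover the compact set $\overline{\lambda E}$, so finitely many of them cover it, and hence a single $Y_j$ does. Then $\lambda E\subset Y_j$, which gives $V(Y_j)\ge\vo(\lambda E)>c$, and by monotonicity $V(Y_k)>c$ for all $k\ge j$. Since $c<V(Y)$ was arbitrary, $\lim_j V(Y_j)\ge V(Y)$, which together with the first paragraph proves the proposition.

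The only step that needs care is the shrinking by $\lambda$: one has to pass from $E$ to a relatively compact sub-ellipsoid before invoking compactness, and this is possible because hermitian ellipsoids centered at $0$ are star-shaped with respect to $0$. No results from the earlier sections are needed.
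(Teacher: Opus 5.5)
Your proof is correct and follows the paper's argument essentially verbatim: monotonicity of $V$ gives $\lim_j V(Y_j)\le V(Y)$, and for the reverse inequality you take $w<V(Y)$, shrink an inscribed ellipsoid slightly so its closure lies in $Y$, and use compactness to place it in a single $Y_k$. You simply make the paper's ``mild shrinking'' explicit via $\lambda E$, using $\vo(\lambda E)=\lambda^{2n}\vo E$.
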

\begin{proof}
(3.1) implies that $V(Y_j)\le V(Y)$ form an increasing sequence, and so $\lim_jV(Y_j)\le V(Y)$. If $w<V(Y)$ there is an
$E\in\bE_0$ inscribed in $Y$ such that $w<\vo E$. At the price of a mild shrinking we can arrange that even
$\overline E\subset Y$. By compactness there is a $k\in\bN$ such that $\overline E\subset Y_k$, whence
\[
w<\vo E\le V(Y_k)\le\lim_{j} V(Y_j).
\]
This being true for all $w<V(Y)$, the claim follows.
\end{proof}

Next recall the notion of a holomorphic disc in a complex manifold $Z$. It is the range of a nonconstant holomorphic
map $\Delta\to Z$ ($\Delta$ is still the unit disc in $\bC$). According to \cite[Theorems 1.1, 1.2]{Le26}:
\begin{prop} %3.2
Suppose $Y\subset \bC^n$ is a bounded and pseudoconvex open set, $0\in Y$. If no holomorphic
disc in $\bC^n$ is contained in $\partial Y$, then there is a unique  $E\in\bE_0$ that is maximal in $Y$.
\end{prop}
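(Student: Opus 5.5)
Proposition 3.2 is quoted from \cite[Theorems 1.1, 1.2]{Le26}, so in the paper it is simply cited. If I were to prove it, I would split it into existence and uniqueness.

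\emph{Existence.} Write competitors as $E=TB$ with $T$ positive self adjoint. Since $Y$ is bounded, $Y\subset R B$ for some $R$, so $\|T\|\le R$ for every inscribed $E$. Take a maximizing sequence $E_j=T_jB\subset Y$. After passing to a subsequence, $T_j\to T$. Because $\det T_j^2$ (proportional to $\vo E_j$) is bounded below by a positive constant, $T$ is invertible. Each point of $TB$ lies in $T_jB\subset Y$ for $j$ large, so $TB\subset Y$. Hence $TB$ is maximal. Note that this step needs neither pseudoconvexity nor the condition on $\partial Y$.

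\emph{Uniqueness.} Suppose $E_0=T_0B$ and $E_1=T_1B$ are both maximal, and set $P_k=T_k^2$. I would join them by the geodesic of $\bE_0\approx\GL_n(\bC)/\U_n$:
\[
P_t=P_0^{1/2}\big(P_0^{-1/2}P_1P_0^{-1/2}\big)^tP_0^{1/2},\qquad E_t=P_t^{1/2}B,\quad 0\le t\le1 .
\]
Along this curve $\log\det P_t$ is affine in $t$. Since $\det P_0=\det P_1$ by maximality, every $E_t$ has the same volume. By the geodesic result \cite[Theorem 1.3]{Le26}, which is the real analogue of Theorem 2.1(b), each $E_t\subset Y$. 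Hence every $E_t$ is maximal.

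To get a contradiction when $E_0\ne E_1$, I complexify the geodesic. Define
\[
H(\zeta)=P_0^{1/2}\big(P_0^{-1/2}P_1P_0^{-1/2}\big)^{\zeta/2},\qquad 0\le\operatorname{Re}\zeta\le1 .
\]
This map is holomorphic in $\zeta$ and satisfies $H(\zeta)B=E_{\operatorname{Re}\zeta}\subset Y$. Because $E_{1/2}$ is maximal, its closure must meet $\partial Y$; otherwise a slight dilation would still be inscribed and have larger volume. So pick a unit vector $b$ with $H(1/2)b\in\partial Y$. Now consider
\[
\zeta\mapsto H(\zeta)b,
\]
defined on a neighborhood of $\zeta=1/2$. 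It maps into $\overline Y$ and takes a boundary value at an interior point of its domain.

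I would then apply the maximum principle to a plurisubharmonic function such as $-\log\operatorname{dist}(\cdot,\partial Y)$, or more generally use the Kontinuitätssatz for the pseudoconvex set $Y$. The aim is to conclude that a neighborhood of $H(1/2)b$ in this disc lies in $\partial Y$. By hypothesis no holomorphic disc lies in $\partial Y$, so the map $\zeta\mapsto H(\zeta)b$ must be constant.

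\emph{Main obstacle.} The hardest step is this last one, in two respects. First, I must exclude the constant case by choosing $b$ well. If $P_0\ne P_1$, then $M=P_0^{-1/2}P_1P_0^{-1/2}\ne I$, and $\zeta\mapsto M^{\zeta/2}b$ is constant only when $b$ lies in the $1$-eigenspace of $M$. So I need to show that the contact set $\overline{E_{1/2}}\cap\partial Y$ cannot be contained in the image under $P_{1/2}^{1/2}$ of that eigenspace. I would prove this by showing that otherwise $E_{1/2}$ could be perturbed within $Y$ to increase its volume. Second, I must make rigorous the passage from "disc in $\overline Y$ touching $\partial Y$ at an interior point" to "disc contained in $\partial Y$" for a merely pseudoconvex, non-smooth $Y$. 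This is where the pseudoconvexity of $Y$ is genuinely used.
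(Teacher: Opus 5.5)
\textbf{There is a genuine gap at the key step of your uniqueness argument.}

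The paper gives no proof of Proposition 3.2; it is quoted from \cite{Le26}. Several parts of your proposal are fine:
\begin{itemize}
\item Your existence argument is correct.
\item The reduction of uniqueness to a geodesic $E_t$ of maximal ellipsoids is correct, and so is its complexification $H(\zeta)B=E_{\operatorname{Re}\zeta}$.
\item Your first obstacle can be handled as you suggest. Shrink along a proper complex subspace and expand along its orthogonal complement; this shows that the contact set of a maximal ellipsoid lies in no proper complex subspace. The subspace you must avoid is really $H(1/2)\ker(M-I)$, not $P_{1/2}^{1/2}\ker(M-I)$, but this argument excludes every proper subspace.
\end{itemize}

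The gap is in your second step. The principle you want is that a holomorphic disc in $\overline Y$ which touches $\partial Y$ at an interior point lies locally in $\partial Y$. This holds when $Y$ has local plurisubharmonic defining functions. It is false for general pseudoconvex $Y$, even when $\partial Y$ contains no holomorphic disc. For example, remove from a strongly convex domain a polynomial hull without analytic structure (Wermer), and take a disc through a point of the hull.

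The tools you name do not rescue it in your situation either. The function $-\log\operatorname{dist}(\cdot,\partial Y)$ lives only on $Y$. Along $\zeta\mapsto H(\zeta)rb$ with $r<1$, your only bound is $\operatorname{dist}(H(\zeta)rb,\partial Y)\ge\operatorname{dist}(H(\zeta)rb,\partial E_{\operatorname{Re}\zeta})$. This bound varies with $\zeta$, so nothing forces a maximum of $-\log\operatorname{dist}$ at $\zeta=1/2$. The maximum principle, or the Kontinuit\"atssatz, therefore only shows that $\{\zeta: H(\zeta)b\in\partial Y\}$ meets every small circle about $1/2$. It does not show that this set contains a neighborhood.

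\textbf{Missing idea.} You must use that the whole holomorphic family $H(\zeta)B$ is inscribed, and measure boundary distance in the moving frame.
\begin{itemize}
\item On the strip $\Sigma=\{0<\operatorname{Re}\zeta<1\}$ set $\tilde Y=\{(\zeta,x)\in\Sigma\times\bC^n: H(\zeta)x\in Y\}$. It is pseudoconvex, being the preimage of $\Sigma\times Y$ under a biholomorphism of $\Sigma\times\bC^n$. Every fiber $\tilde Y[\zeta]=H(\zeta)^{-1}Y$ contains $B$.
\item Let $d(\zeta,x)$ be the Euclidean distance from $x$ to $\bC^n\setminus\tilde Y[\zeta]$. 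Then $-\log d$ is plurisubharmonic on $\tilde Y$. It is the supremum over unit $v$ of $-\log$ of the Hartogs radius in the direction $(0,v)$, by the Lelong theorem the paper uses in Proposition 4.3.
\item Fix a unit vector $b$ and $r\in(0,1)$. Since $B(rb,1-r)\subset B\subset\tilde Y[\zeta]$, we get $d(\zeta,rb)\ge1-r$ for every $\zeta$.
\item Since also $\overline{B(rb,1-r)}\setminus B=\{b\}$, equality $d(\zeta,rb)=1-r$ holds iff $b\notin\tilde Y[\zeta]$, i.e.\ iff $H(\zeta)b\in\partial Y$.
\item Now suppose $H(1/2)b\in\partial Y$. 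The subharmonic function $\zeta\mapsto-\log d(\zeta,rb)$ then attains its upper bound $-\log(1-r)$ at the interior point $1/2$. It is therefore constant, so $H(\zeta)b\in\partial Y$ for all $\zeta\in\Sigma$.
\end{itemize}
Choosing such a contact point with $b\notin\ker(M-I)$, which your first step provides, gives a nonconstant holomorphic disc in $\partial Y$. That is the contradiction you need.
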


In \cite{Le26} we only work with domains, that is, connected open sets, but connectedness is irrelevant, because the
maximal ellipsoid in a disconnected $Y$ is the same as in its component that contains $0$.---Recall that an open
subset $O$ of a complex manifold $Z$ is strongly pseudoconvex if each $y\in\partial O$ has a neighborhood $U$
with a strongly plurisubharmonic function $\phi\in C^2(U)$ such that $O\cap U=\{z\in U:\phi(z)<0\}$ and $d\phi$
does not vanish at $y$.

\begin{lem} %3.3
Consider a pseudoconvex family $X\subset S\times\bC^n$ with $X$ strongly pseudoconvex and the projection
$\overline X\to S$ proper. For every $s\in S$ the fiber $X[s]$ contains a unique maximal ellipsoid $E_s\in\bE_0$, and
$s\mapsto E_s$ defines a continuous map $S\to\bE_0$.
\end{lem}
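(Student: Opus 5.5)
Two things have to be established: for each fixed $s$ the fiber $X[s]$ has a unique maximal ellipsoid, and the map $s\mapsto E_s$ is continuous.

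\emph{Existence and uniqueness.} We apply Proposition 3.2 to $Y=X[s]$.

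First, $X[s]$ is bounded, because the projection $\overline X\to S$ is proper, so $\overline X\cap(\{s\}\times\bC^n)$ is compact. It is pseudoconvex by hypothesis, and it contains $0$.

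Second, we must exclude holomorphic discs in $\partial X[s]$. Note that $\partial X[s]\subset\{x:(s,x)\in\partial X\}$. Let $(s,y)\in\partial X$, and choose a local defining function $\phi$ near $(s,y)$ that is strongly plurisubharmonic, with $X\cap U=\{\phi<0\}$. Suppose $h:\Delta\to\partial X[s]$ is holomorphic and nonconstant. Then $\phi(s,h(\zeta))\le0$ and $\phi(s,h(\zeta))=0$ near a point $\zeta_0$ with $h(\zeta_0)=y$. On the other hand, $\zeta\mapsto\phi(s,h(\zeta))$ is strictly subharmonic wherever $h'\ne0$, because $\phi$ is strongly plurisubharmonic. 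A strictly subharmonic function cannot vanish identically on an open set, so we have a contradiction. (If $h'(\zeta_0)=0$, move $\zeta_0$ slightly to a point where $h'\ne0$; that point still maps into $\partial X[s]$.) Hence Proposition 3.2 gives a unique maximal $E_s$.

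\emph{Continuity.} Fix $s_0$ and let $s_j\to s_0$. The plan is to show that every subsequence of $E_{s_j}$ has a further subsequence converging to $E_{s_0}$.

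\textbf{Precompactness.} By properness, the fibers $X[s_j]$ lie in a fixed ball for $s_j$ near $s_0$. Moreover $X$ is open and contains $(s_0,0)$, so $X[s_j]\supset rB$ for some fixed $r>0$. Hence $V(X[s_j])\ge\vo rB$. Write $E_{s_j}=T_jB$ with $T_j$ positive. Then the $T_j$ are bounded above, and $\det T_j$ is bounded below. Together these bound $T_j$ below as well, so a subsequence converges to some positive $T$.

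\textbf{The limit is inscribed.} Set $E=TB$. If $x\in E$, then $x\in T_jB$ for large $j$, so $(s_j,x)\in X$ and therefore $(s_0,x)\in\overline X$. Thus $E\subset\overline{X[s_0]}$ in the sense that $E\subset\{x:(s_0,x)\in\overline X\}$. Since $E$ is open, $E\subset\operatorname{int}\{x:(s_0,x)\in\overline X\}$. The main obstacle is to show that this interior equals $X[s_0]$. This should follow from strong pseudoconvexity: near a boundary point, $\overline X=\{\phi\le0\}$ with $d\phi\ne0$. The restriction $\phi(s_0,\cdot)$ is strongly plurisubharmonic on $\bC^n$, so if it were $\le0$ on an open set and vanished at an interior point, that point would be an interior maximum of a strictly subharmonic function, which is impossible. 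Hence $E\subset X[s_0]$ and $\vo E\le V(X[s_0])$.

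\textbf{Lower semicontinuity of $V$.} Scale $E_{s_0}$ by $\lambda<1$. Its closure is a compact subset of $X[s_0]$, so $\{s_0\}\times\lambda\overline{E_{s_0}}$ is a compact subset of $X$. Since $X$ is open, $\lambda E_{s_0}\subset X[s_j]$ for large $j$. Therefore $\liminf_j\vo E_{s_j}\ge\lambda^{2n}\vo E_{s_0}$ for every $\lambda<1$.

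\textbf{Conclusion.} Combining the two bounds, $\vo E=\lim_j\vo E_{s_j}\ge\vo E_{s_0}$, while $E\subset X[s_0]$ gives $\vo E\le\vo E_{s_0}$. So $E$ is maximal in $X[s_0]$, and uniqueness gives $E=E_{s_0}$. Since every subsequence has a further subsequence converging to $E_{s_0}$, the whole sequence converges, and $s\mapsto E_s$ is continuous.
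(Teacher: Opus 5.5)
Your proof is correct and follows essentially the same route as the paper. You use Proposition 3.2 for existence and uniqueness, then precompactness of the $T_j$, inclusion of the limit ellipsoid $TB$ in $X[s_0]$ via strong pseudoconvexity, the lower volume bound from the shrunken ellipsoids $\lambda E_{s_0}$, and the subsequence argument. The only difference is organizational. The paper proves once that no holomorphic disc in $\overline X$ meets $\partial X$, and applies this both to $\{s\}\times\partial X[s]$ and to $\{s\}\times TB$. You instead run the maximum-principle argument for the strongly plurisubharmonic defining function separately in those two places.
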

\begin{proof}
The assumption implies that any holomorphic disc in $\overline X\subset S\times \bC^n$ is contained in $X$. Indeed,
suppose $h:\Delta\to S\times\bC^n$ is holomorphic, $h(\Delta)\subset \overline X$, and $h(\zeta)\in\partial X$
for some $\zeta\in\Delta$. Choose a neighborhood $U\subset S\times\bC^n$ of $h(\zeta)$ and 
a strongly plurisubharmonic $\phi\in C^2(U)$ such that 
$X\cap U=\{z\in U: \phi(z)<0\}$, and $d\phi$ does not vanish at $h(\zeta)$. Then $\phi\circ h$, defined near $\zeta$,
is subharmonic and attains its maximum, $0$, at $\zeta$. It follows that $\phi\circ h=0$ and 
$h^*\partial\bar\partial\phi=\partial\bar\partial (\phi\circ h)=0$ near $\zeta$, whence $h$ is constant (near $\zeta$, but
by analytic continuation, everywhere). Thus there is no holomorphic disc in $\overline X$ that intersects
$\partial X$.

In particular, for no $s$ does $\{s\}\times\partial X[s]\subset\partial X$ contain a holomorphic disc. By Proposition
3.2 therefore each fiber $X[s]$ contains a unique maximal ellipsoid $E_s\in\bE_0$.

Consider now a sequence $s_j\in S$ that converges to $s\in S$. With $B\subset\bC^n$ the unit ball we can write
$E_{s_j}=T_jB$, where $T_j$ are positive self adjoint operators on $\bC^n$. As $X$ is open, there is an $r\in(0,\infty)$
such that $rB\subset X[s_j]$ for all $j$; as the projection $\overline X\to S$ is proper, there is an $R\in(r,\infty)$
such that $X[s_j]\subset RB$. Therefore $\det T_j\ge r^n$, the operator norms $||T_j||\le R$, and the $T_j$ form a
relatively compact subset of the space of positive self adjoint operators.

Suppose first that the $T_j$ converge to an operator $T$. Since $\{s_j\}\times T_jB\subset X$, it follows that
$\{s\}\times TB\subset \overline X$. Now $\{s\}\times  TB$ is the union of holomorphic discs; as we saw, this implies that
$\{s\}\times TB\subset X$, i.e., $TB\subset X[s]$. Thus
\begin{equation} %3.2
\lim_j\vo T_jB=\vo TB\le\vo E_s.
\end{equation}
Also, if $w<\vo E_s$, there is a $\rho\in(0,1)$ such that $w<\vo \rho E_s$. For $j$ sufficiently large, 
$\rho\overline E_s\subset X[s_j]$, and so $w<\vo T_jB$. Hence $w\le\lim_j\vo T_jB$, and 
letting $w\to\vo E_s$ shows that in (3.2) equality holds. Therefore $TB\in\bE_0$ is 
maximal in $X[s]$. By uniqueness, $E_s=TB=\lim_j E_{s_j}$.

We are done if the $T_j$ are known to converge. Without this assumption we still obtain that any
subsequence of ${s_j}$ contains a further subsequence ${\sigma_k}$ along which $E_{\sigma_k}\to E_s$, by virtue of 
what we have already proved. This then implies that $E_{s_j}\to E_s$ along the entire sequence, and so
$s\mapsto E_s$ is continuous.
\end{proof}

\begin{proof}[Proof of Theorem 1.2]
Assume first that $X\subset S\times\bC^n$ is not only a pseudoconvex family, but $X$ is strongly pseudoconvex, and
the projection $\overline X\to S$ is proper. Lemma 3.3 implies that $-\log v$ is continuous; to show it is plurisubharmonic, we
need to prove the sub-mean-value property over discs in $S$. For this we can assume that the disc is the unit disc
$\Delta\subset\bC$ and $S\subset\bC$ is a neighborhood of $\overline\Delta$. As before, let $E_s\in\bE_0$ be the
maximal ellipsoid in $X[s]$. By Theorem 2.1 and Lemma 3.3 again there is a holomorphic $H:\Delta\to\GL_n(\bC)$ such that
\[
F(s)=\begin{cases} H(s)B &\text{if }s\in\Delta\\
      E_s &\text{if }s\in\partial\Delta\end{cases}
\]
defines a continuous $F:\overline\Delta\to\bE_0$; and $F(s)\subset X[s]$ for $s\in \overline\Delta$. Therefore
\(
v(0)\ge\vo F(0).
\)
As $\log\big(|\det H(s)|^2\vo B\big)=\log\vo F(s)$ is a harmonic function of $s\in\Delta$, 
the sub-mean-value property follows:
\[
\log v(0)\ge\frac1{2\pi}\int_0^{2\pi}\log\vo F(re^{it})\,dt\to\frac1{2\pi}\int_0^{2\pi}\log v(e^{it})\, dt
\quad\text{as } r\nearrow 1.
\]

Short of the extra assumptions on $X$, construct a smooth strongly plurisubharmonic exhaustion 
function $\phi:X\to\bR$. If $c\in\bR$ is a regular value of $\phi$ and $S_c=\{s\in S:\phi(s,0)<c\}$, then
\[
X_c=\{(s,x)\in S_c\times\bC^n: \phi(s,x)<c\}\subset S_c\times\bC^n
\]
is a pseudoconvex family, $X_c$ is strongly pseudoconvex, and projection of the relative closure of $X_c$ in
$S_c\times\bC^n$,
\[
\overline {X_c}=\{(s,x)\in S_c\times\bC^n:\phi(s,x)\le c\}\to S_c
\]
is proper. By what we have proved, if
\[
v_c(s)=\sup\{\vo E:E\in\bE_0,\, E\subset X_c[s]\}, \qquad s\in S_c,
\]
then $-\log v_c$ is plurisubharmonic. Therefore $-\log v$, by Proposition 3.1 the decreasing limit of $-\log v_c$ as
$c\to\infty$, is also plurisubharmonic.
\end{proof}

\section{Proof of Theorem 1.4} %section 4

Given $n=2,3,\dots$, let $\bK'$ be the set of smoothly bounded, strongly convex, balanced
domains $Y\subset\bC^n$. By
strong convexity we mean that the normal curvatures of $\partial Y$  are positive. Thus $Y\in\bK'$ are the unit balls of
smooth norms $||\,\,||$ on $\bC^n$ whose square is strongly convex: the real Hessian of $||\,\,||^2$, away from $0$, is positive definite. The following is a strengthening of Theorem 1.4:
\begin{thm}
If $\Psi:\bK'\to\bE_0$ is a $\GL_n(\bC)$ equivariant map: $\Psi(AY)=A\Psi(Y)$ whenever $A\in\GL_n(\bC)$ and
$Y\in\bK'$, then there is a pseudoconvex family $X\subset S\times\bC^n$ whose fibers are in $\bK'$ but
\begin{equation} %4.1
\bigcup_{s\in S}\big(\{s\}\times\Psi(X[s])\big)\subset S\times\bC^n
\end{equation}
is not a pseudoconvex family.
\end{thm}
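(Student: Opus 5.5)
The plan is to use equivariance to pin down $\Psi$ on fibers with a large symmetry group, so that the family (4.1) becomes a Hartogs-type family of balls. Pseudoconvexity of such a family is then a subharmonicity condition on a single real function, and I would aim to violate it by a suitable choice of pseudoconvex family $X$.

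\emph{Reduction to balls.} Let $G\subset\U_n$ be the group of monomial unitary matrices: coordinate permutations composed with $\text{diag}(e^{i\theta_1},\dots,e^{i\theta_n})$. Since $n\ge2$, $G$ acts irreducibly on $\bC^n$. Hence the only positive self adjoint operators commuting with $G$ are positive multiples of $I$, and a $G$-invariant ellipsoid in $\bE_0$ is a ball $cB$. Consider the $G$-invariant, balanced domains
\[
D_{a,b,c}=\Big\{x:\ a|x|^2+b\sum_j|x_j|^4+c\sum_{j<k}|x_j|^2|x_k|^2<1\Big\},\qquad a>0,\ b,c\ge0 .
\]
I would first check that these lie in $\bK'$ for parameters in an open set. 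For $b=c=0$ the domain is a ball, and strong convexity is an open condition in $C^2$, so the $D_{a,b,c}$ are in $\bK'$ for all $a>0$ and $(b,c)$ in a small neighbourhood of $0$. By equivariance, $\Psi(D_{a,b,c})=gD_{a,b,c}$ for every $g\in G$, so $\Psi(D_{a,b,c})=\rho(a,b,c)B$ for some positive function $\rho$. Equivariance under dilations $x\mapsto\lambda x$, $\lambda>0$, gives the homogeneity relation $\rho(\lambda^{-2}a,\lambda^{-4}b,\lambda^{-4}c)=\lambda\rho(a,b,c)$.

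\emph{The test families.} Take $S$ a disc, and let $u,w,z$ be subharmonic functions on $S$ with $b=e^{w}$ and $c=e^{z}$ small. Put
\[
X=\{(s,x):\ e^{u(s)}|x|^2+e^{w(s)}\textstyle\sum_j|x_j|^4+e^{z(s)}\sum_{j<k}|x_j|^2|x_k|^2<1\}.
\]
Each term is of the form $e^{\text{psh}}$, so the defining function is plurisubharmonic and $X$ is a sublevel set of a plurisubharmonic function on a Stein manifold. Hence $X$ is pseudoconvex, i.e.\ Stein, and its fibers lie in $\bK'$. The family (4.1) is then the Hartogs-type domain $\{(s,x):|x|<\rho(s)\}$, with $\rho(s)=\rho(e^{u(s)},e^{w(s)},e^{z(s)})$. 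It is pseudoconvex if and only if $-\log\rho(s)$ is subharmonic in $s$. Writing $g(u,w,z)=-\log\rho(e^u,e^w,e^z)$, the theorem will follow once I show that $g(u,w,z)$ fails to be subharmonic for some admissible triple of subharmonic functions. Letting the triple range over affine functions of $\text{Re}\,s$ and over maxima of such functions, any $\Psi$ for which the conclusion fails must have $g$ convex and nondecreasing in each variable on the admissible region. Homogeneity gives $g(u+2\tau,w+4\tau,z+4\tau)=g(u,w,z)-\tau$. Applied along the direction $(2,4,4)$, this linear behaviour combines with monotonicity to give strong constraints.

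\emph{The main obstacle.} Convexity, monotonicity and homogeneity alone do not seem contradictory; $g(u,w,z)=u/2$ (i.e.\ $\rho=a^{-1/2}$) is a consistent candidate. So a genuinely new input is needed, and this is where I expect the difficulty. What I would try first is to enlarge the test families beyond coefficients of the form $e^{\text{psh}}$. I would allow fibers $\{a|x|^2+\sum_j|x_j|^4+\text{Re}(\overline{\beta(s)}\,q(x))<1\}$, where $\beta$ is holomorphic and $q$ is a $G$-semi-invariant quartic, for instance $q=\prod_j x_j^{4/n}$ when $n$ divides $4$, or more generally a polynomial of the form $\sum_j x_j^4$ twisted by a character. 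In such families the fiber moves through domains that are not $G$-invariant, but each fiber is $A(s)$-conjugate to a $G$-invariant one via an explicit \emph{non-holomorphic} $A(s)$. Equivariance then expresses $\Psi(X[s])$ as $A(s)\rho B$, an ellipsoid family depending anti-holomorphically on $s$. For such a family pseudoconvexity fails unless the anti-holomorphic dependence cancels, and I expect to show that for small $\beta$ this forces a contradiction with the constraints on $g$ derived above.
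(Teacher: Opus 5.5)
Your proposal has a genuine gap: it never produces a contradiction. Your symmetry reduction is correct, and the paper uses the same one: invariance under monomial unitaries forces $\Psi(Y)=rB$. But, as you observe yourself, families whose fibers are the $G$-invariant domains $D_{a,b,c}$ only yield convexity, monotonicity and homogeneity of $g$. These constraints are consistent (e.g. $\rho=a^{-1/2}$), so nothing can come out of that part.

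The remedy you sketch does not work as stated. For a holomorphic quartic $q$ you have $q(e^{i\theta}x)=e^{4i\theta}q(x)$. So when $\beta(s)\neq0$, the fiber $\{a|x|^2+\sum_j|x_j|^4+\text{Re}(\overline{\beta(s)}q(x))<1\}$ is not balanced, hence not in $\bK'$, and $\Psi$ is not even defined on it. Two further claims are also unsupported:
\begin{itemize}
\item that each such fiber is $A(s)$-conjugate to a $G$-invariant domain;
\item that an anti-holomorphically moving family of ellipsoids cannot be pseudoconvex.
\end{itemize}

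The missing idea is that monotonicity holds for \emph{arbitrary} nested pairs in $\bK'$, not just within your $G$-invariant family. If $\Psi$ preserves pseudoconvexity of families and $\overline{Y_1}\subset Y_2$, then $\Psi(Y_1)\subset\Psi(Y_2)$ (Lemma 4.2). The paper proves this as follows.
\begin{itemize}
\item Set $u_2=\|\cdot\|_2^2$ and $u_1=p\|\cdot\|_1^2+q$, where $\|\cdot\|_j$ is the norm with unit ball $Y_j$.
\item Take a regularized maximum $u=M(u_1,u_2)$. It is convex, circle invariant and strongly convex off $0$, with $\{u<1\}=Y_2$ and $\{u<p+q\}=Y_1$.
\item Include both domains as fibers of the rotationally symmetric pseudoconvex family $\{(s,x):|s|+u(x)<2\}$ over a disc, where $X[1]=Y_2$ and $X[2-p-q]=Y_1$ with $2-p-q>1$.
\item The hermitian norms $h_s$ of $\Psi(X[s])$ are radial in $s$, and $\log h_s(y)$ is subharmonic by Proposition 4.3. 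Hence $h_s(y)$ is nondecreasing in $|s|$, which gives the inclusion.
\end{itemize}

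With this lemma no further family is needed. Take a \emph{non-unitary} $A$ and a $G$-invariant $Y\in\bK'$ with $\overline{AY}\subset Y$. The paper uses $A(\xi,\eta)=(3\xi/4+\eta/5,\,3\xi/4-\eta/5)$ and a symmetrized smoothing $Y$ of the bidisc. Then $\Psi(Y)=rB$ by your symmetry argument, and equivariance plus monotonicity give $A(rB)=\Psi(AY)\subset\Psi(Y)=rB$. This is impossible, since $|A(\xi,0)|=\sqrt{18/16}\,|\xi|>|\xi|$.

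So the way to escape $G$-invariance is not to move fibers holomorphically through non-invariant domains. It is to compare one invariant domain with its image under a linear map that is not a Euclidean contraction, yet still maps the domain compactly into itself.
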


The proof will be by contradiction, and we start by studying general maps $\Psi:\bK'\to\bE_0$ that associate with
pseudoconvex families $X\subset S\times\bC^n$ families (4.1) that are also pseudoconvex. If $\Psi$ has this
property, we will say that it preserves pseudoconvexity of families. We will write $\Psi X$ for the family (4.1).

For example, any constant map $\Psi$ preserves pseudoconvexity of  families. Less obviously, with notation (3.1),
$\Psi(Y)=V(Y)B$ also preserves pseudoconvexity of families. This
is just a reformulation of Theorem 1.2. Of course, these maps are not equivariant in the sense of Theorem 4.1; but
they become equivariant if we let the action of $\GL_n(\bC)$ on $\bE_0$ be induced by the trivial representation
of $\GL_n(\bC)$, respectively, the square of the determinantal representation, on $\bC^n$.

\begin{lem} %4.2
If $\Psi:\bK'\to\bE_0$ preserves pseudoconvexity of families, then it is monotone in the sense that 
$\Psi(Y_1)\subset\Psi(Y_2)$ whenever $Y_1,Y_2\in\bK'$, $\overline{Y_1}\subset Y_2$.
\end{lem}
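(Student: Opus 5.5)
The plan is to build an explicit pseudoconvex family over a disc whose fibers are $Y_1$ over the boundary circle and $Y_2$ over the center. Applying the hypothesis that $\Psi$ preserves pseudoconvexity of families, and then the maximum principle in $s$, will give the inclusion.

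\textbf{Setup.} Let $N_1,N_2$ be the norms with unit balls $Y_1,Y_2$. Since $\overline{Y_1}\subset Y_2$, there is $c\in(0,1)$ with $N_2\le cN_1$. By equivalence of norms there is $C\ge 1$ with $N_1\le CN_2$. Fix a smooth function $M:\bR^2\to\bR$ with the following properties:
\begin{itemize}
\item $M$ is convex and nondecreasing in each variable;
\item $\partial M/\partial a+\partial M/\partial b>0$;
\item $M(a,b)=\max(a,b)$ whenever $|a-b|\ge\var$.
\end{itemize}
This is a regularized maximum, with $\var$ small compared to $1-c^2$.

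\textbf{The family.} Let $S\subset\bC$ be a neighborhood of $\overline\Delta$, let $u(s)=K(|s|^2-1)$ with $K$ large, and put
\[
\Phi(s,x)=M\big(e^{2u(s)}N_1(x)^2,\;N_2(x)^2\big),\qquad X=\{(s,x)\in S\times\bC^n:\Phi(s,x)<1\}.
\]

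\emph{$X$ is a pseudoconvex family.} Both $N_j^2$ and $e^{2u+2\log N_j}$ are plurisubharmonic, because $\log N_j$ is. Since $M$ is convex and increasing, $\Phi$ is a continuous plurisubharmonic function. Hence $X$ is pseudoconvex (after, if needed, intersecting with a smaller strongly pseudoconvex neighborhood of $\overline\Delta$ in $S$), i.e. Stein.

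\emph{The fibers lie in $\bK'$.} Each fiber $X[s]$ is a sublevel set of a convex, circle-invariant function, so it is convex and balanced. Strong convexity and smoothness of the boundary follow from the strong convexity of $N_j^2$ together with $\nabla M\ne0$.

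\emph{Identification of fibers.} For $|s|=1$ we have $u=0$. Near the level set $\{\Phi=1\}$ we have $N_1^2\approx1$ and $N_2^2\le c^2N_1^2$, so the two arguments of $M$ differ by more than $\var$ there. Hence $\Phi=N_1^2$ near that level set and $X[s]=Y_1$. At $s=0$, $e^{-2K}N_1^2\le e^{-2K}C^2N_2^2$, and near $\{N_2=1\}$ this is smaller than $N_2^2-\var$ once $K$ is large. Hence $X[0]=Y_2$. Verifying these identifications exactly, including that no spurious components of the level set appear, is the step I expect to require the most care. The choice of $\var$ relative to $1-c^2$ and of $K$ relative to $C$ should handle it.

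\textbf{Conclusion.} By hypothesis $\Psi X$ is a pseudoconvex family whose fibers are the ellipsoids $\Psi(X[s])$. Let $q_s$ be the hermitian norm with unit ball $\Psi(X[s])$. For a family with balanced fibers, Stein-ness of $\Psi X$ means that $(s,x)\mapsto\log q_s(x)$ is plurisubharmonic; this is the standard Hartogs-type characterization, which I would either cite or check directly. Fix $x$. The function $s\mapsto\log q_s(x)$ is subharmonic on $\Delta$, so the sub-mean-value property at $0$ gives
\[
\log q_0(x)\le\frac1{2\pi}\int_0^{2\pi}\log q_{e^{it}}(x)\,dt=\log q_{Y_1}(x),
\]
because every boundary fiber equals $Y_1$. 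Thus $q_{Y_2}\le q_{Y_1}$, that is, $\Psi(Y_1)\subset\Psi(Y_2)$. If $s\mapsto q_s$ is only upper semicontinuous this sub-mean-value property still holds, so no extra regularity of $\Psi$ is needed.
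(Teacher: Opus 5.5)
Your proposal is correct and follows the paper's strategy. In both, $Y_1$ and $Y_2$ are realized as fibers of a pseudoconvex family over a disc, built from a regularized maximum of the squared norms, and the conclusion comes from the subharmonicity of $s\mapsto\log q_s(x)$ (the paper's Proposition 4.3) plus the maximum principle or sub-mean-value property.

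The only difference is the family. The paper builds one strongly convex function $u=M(p\|\cdot\|_1^2+q,\|\cdot\|_2^2)$ with $\{u<p+q\}=Y_1$ and $\{u<1\}=Y_2$, then takes the convex domain $\{|s|+u(x)<2\}$, so pseudoconvexity and $\bK'$-membership of the fibers are checked once. You put the $s$-dependence inside $M$, so you must check pseudoconvexity and $\bK'$-membership for each fiber, which your argument does adequately.
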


We will need the following observation (certainly not original):
\begin{prop} %4.3
Let $S$ be a complex manifold and for each $s\in S$ let $||\,\,||_s$ be a norm on $\bC^m$. If
\begin{equation} %4.2
X=\{(s,x)\in S\times\bC^m:||x||_s<1\}
\end{equation} is a pseudoconvex family, then $log||y||_s$ is a plurisubharmonic function of
$s\in S$ for each $y\in \bC^m$.
\end{prop}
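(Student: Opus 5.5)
For $y=0$ there is nothing to prove, since $\log\|0\|_s\equiv-\infty$. So fix $y\ne0$. The plan is to reduce to a Hartogs domain over $S$ with one-dimensional disc fibers, and then run the same continuity-method/maximum-principle argument used in the proof of Theorem 2.1(b).

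Put $u(s)=\log\|y\|_s$ and consider
\[
\Omega=\{(s,\zeta)\in S\times\bC:\zeta y\in X[s]\}=\{(s,\zeta):|\zeta|<e^{-u(s)}\}.
\]
The map $\iota(s,\zeta)=(s,\zeta y)$ is a holomorphic embedding of $\Omega$ onto the closed submanifold $X\cap(S\times\bC y)$ of $X$. Hence $\Omega$ is Stein, and if $\phi$ is a continuous plurisubharmonic exhaustion of $X$, then $\phi\circ\iota$ is one of $\Omega$.

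Next, $u$ is upper semicontinuous. If $\|y\|_{s_0}<c$, then $(s_0,y/c)\in X$. Since $X$ is open, $(s,y/c)\in X$ for $s$ near $s_0$, which gives $\|y\|_s<c$ there. So it remains to prove the sub-mean-value property on holomorphic discs.

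For this, let $h$ be holomorphic from a neighborhood of $\overline\Delta$ into $S$, and let $f$ be holomorphic near $\overline\Delta$ with $\mathrm{Re}\,f>u\circ h$ on $\partial\Delta$. I will show $\mathrm{Re}\,f\ge u\circ h$ on $\Delta$. This suffices, by the standard reduction: approximate $u\circ h|_{\partial\Delta}$ from above by continuous functions, take their harmonic extensions, and approximate those by real parts of holomorphic functions after shrinking the disc slightly.

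Consider the family of holomorphic maps
\[
g_t:\overline\Delta\to S\times\bC,\qquad g_t(\sigma)=\big(h(\sigma),\,t\,e^{-f(\sigma)}\big),\qquad t\in[0,1].
\]
On $\partial\Delta$ we have $|t e^{-f}|\,\|y\|_{h}\le e^{u\circ h-\mathrm{Re}f}<1$. Since $u$ is upper semicontinuous and $\partial\Delta$ is compact, there is a compact set $C\subset\Omega$ containing $g_t(\partial\Delta)$ for all $t\in[0,1]$. Let $M=\max_C\phi\circ\iota$ and
\[
\Theta=\{t\in[0,1]:g_t(\overline\Delta)\subset\Omega\}.
\]
Clearly $0\in\Theta$. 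For $t\in\Theta$, the function $\phi\circ\iota\circ g_t$ is subharmonic, so by the maximum principle $g_t(\overline\Delta)\subset C_1=\{\phi\circ\iota\le M\}$, which is compact in $\Omega$. As in Theorem 2.1(b), this gives both openness of $\Theta$ (by continuity in $t$) and closedness (limits stay in $C_1\subset\Omega$). Thus $\Theta=[0,1]$.

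Letting $t\to1$, we get $|e^{-f(\sigma)}|\,\|y\|_{h(\sigma)}\le1$, that is, $u\circ h\le\mathrm{Re}\,f$ on $\Delta$, as required.

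The main technical point I expect is the approximation step from upper semicontinuous boundary data to holomorphic $f$. Upper semicontinuity of $u$ is needed both for that step and for the compactness of $C$. Alternatively, one can quote Oka's lemma directly: on the Stein, hence pseudoconvex, Hartogs domain $\Omega$, the function $-\log$ of the vertical boundary distance is plurisubharmonic, and at $(s,0)$ this function equals $u(s)$.
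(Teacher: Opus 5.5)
Your proof is correct. Its first step is the same as the paper's: restricting to the complex line $\bC y$ to get the Hartogs domain $\Omega=\{(s,\zeta):|\zeta|<e^{-u(s)}\}$.

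The routes diverge at the key step. The paper localizes $S$ to a ball in $\bC^k$ and cites Lelong's theorem that $-\log\delta_{(0,1)}$, the boundary distance in the vertical direction, is plurisubharmonic on a pseudoconvex domain. Its pullback under $s\mapsto(s,0)$ is exactly $u$; this is the alternative you mention at the end. You instead prove the needed case of that theorem from scratch. You get upper semicontinuity from the openness of $X$. You then run a continuity-method argument with the discs $\sigma\mapsto(h(\sigma),te^{-f(\sigma)})$ and the exhaustion $\phi\circ\iota$, modeled on the proof of Theorem 2.1(b). Finally you combine this with the standard characterization of plurisubharmonicity by holomorphic majorants $\mathrm{Re}\,f$ on discs.

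The paper's version is two lines, but it rests on a classical result formulated for domains in $\bC^N$, which is why it must localize. Yours is self-contained and works intrinsically on the manifold $S$. It uses the Stein hypothesis only through a continuous plurisubharmonic exhaustion and keeps the toolkit uniform with Section 2. The cost is the separate upper semicontinuity check and the routine approximation step from upper semicontinuous boundary data to $\mathrm{Re}\,f$.

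Two small corrections.
\begin{itemize}
\item Compactness of $C$ does not need upper semicontinuity of $u$. You can take $C$ to be the continuous image of $[0,1]\times\partial\Delta$ under $(t,\sigma)\mapsto g_t(\sigma)$, which lies in $\Omega$ pointwise. Upper semicontinuity is really used only in the approximation step, where it gives the strict boundary inequality on a slightly smaller circle.
\item Once $\Theta=[0,1]$, the value $t=1$ already gives the strict inequality $u\circ h<\mathrm{Re}\,f$ on $\overline\Delta$, so no limit $t\to1$ is needed.
\end{itemize}
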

\begin{proof}
When $(s,x)\mapsto||x||_s$ is smooth, a proof can be easily put together from what is available in the literature, but 
there is no need to assume smoothness. It suffices to prove when $S\subset\bC^k$ is a ball
and $m=1$.
Then $X$ is a Hartogs domain, $X=\{(s,x):|x|<R(s)\}\subset S\times\bC$ with some function
$R:S\to(0,\infty]$. It is well known that pseudoconvexity forces $-\log R$ to be plurisubharmonic,
e.g., because $R$ is the pullback along the embedding $S\ni s\mapsto(s,0)\in X$ of $\delta_{(0,1)}$, distance to
$\partial X$ in the complex direction $(0,1)\in\bC^k\times\bC$ and, according to Lelong,
$-\log\delta_{(0,1)}$ is plurisubharmonic since $X$ is pseudoconvex \cite[Theorem 1]{L52} or \cite[Theorem 2.4.2]{L68}.
Hence $\log||y||_s=\log|y|-\log R(s)$ is plurisubharmonic.
\end{proof}

In this paper we will not need it, but it is easy to generalize Proposition 4.3: under its assumption, with any holomorphic
$h:S\to\bC^m$ the function $s\mapsto \log ||h(s)||_s$ is plurisubharmonic. Indeed, it suffices to check this on the
set where $h\neq0$, i.e., we can assume $h$ nowhere vanishes on $S$. But then consider the family of norms on $\bC$
given by $||\zeta||'_s=||\zeta h(s)||_s$, $\zeta\in\bC$, and the family 
\(
X'=\{(s,\zeta)\in S\times\bC:||\zeta||'_s<1\}.
\)
This is the preimage of $X$ under the holomorphic map 
\[
S\times\bC\ni(s,\zeta)\mapsto \big(s,\zeta h(s)\big)\in S\times\bC^m,
\]
hence pseudoconvex. Proposition 4.3 therefore implies that $||h(s)||_s=||1||'_s$ is a logarithmically plurisubharmonic 
function of $s$. In fact, the converse is true, too: If $S$ is Stein, the norms $||\,\,||_s$ are such that the function
$S\ni s\mapsto\log||h(s)||_s$ is plurisubharmonic whenever $h:S\to\bC^m$ is holomorphic, and 
(4.2) defines an open $X$, then this $X$ is pseudoconvex. The reason is that the assumptions imply that the
defining function $S\times\bC^m\ni (s,x)\mapsto \log||x||_s$ of $X$ is plurisubharmonic.---In sum, all this shows that when
$S$ is an open subset of $\bC$, (4.2) is a pseudoconvex family if and only if $||\,\,||_s$ is what interpolation
theory \cite[p. 356]{Ro84} calls a subinterpolation family of norms.

\begin{proof}[Proof of Lemma 4.2]

We will include $Y_1,Y_2$ as fibers in a pseudoconvex family $X\subset S\times\bC^n$ over a 
disc $S\subset\bC$. The construction of $X$ goes as follows.
 
For $j=1,2$ let $||\,\,||_j$ be the norm whose unit ball is $Y_j$; then on $\bC^n\setminus\{0\}$, $||\,\,||_1^2>||\,\,||_2^2$ are
smooth and strongly convex. Let $q=\max_{\overline {Y_1}}||\,\,||_2^2<1$, and choose $p>0$ so that
\begin{equation} %4.3
p||\,\,||_1^2+q<1\qquad\text{on }\overline {Y_2}.
\end{equation}
The functions $u_1=p||\,\,||_1^2+q\ge q$ and $u_2=||\,\,||_2^2$ satisfy, with some $\var>0$,
\[
u_1>u_2+2\var\quad\text{on }\overline Y_1,\qquad u_2>u_1+2\var\quad\text{on }\partial Y_2.
\]

A regularized maximum function $M=M_{\var,\var}:\bR^2\to\bR$ is a smooth, convex function, increasing in both
variables, $M(a+c,b+c)=M(a,b)+c$, and
\[
M(a,b)\ge\max(a,b), 
\]
with equality if $|a-b|\ge2\var$, see \cite[I.5.18]{D12}. It follows that $u=M(u_1,u_2):\bC^n\to[0,\infty)$ is convex, smooth 
on $\bC^n\setminus\{0\}$; $u(\zeta y)=u(y)$ if $\zeta\in\bC$ is unimodular;  $u=u_1=p+q<1$ on $\partial Y_1$ (cf. (4.3))
and  $u=u_2=1$ on $\partial Y_2$. In fact, $u$ is strongly convex on $\bC^n\setminus\{0\}$. This follows from
\[
u(t)-\delta |t|^2=M\big(u_1(t)-\delta |t|^2,u_2(t)-\delta |t|^2\big),
\]
because locally $\delta>0$ can be chosen so that $u_j(t)-\delta |t|^2$ are convex, whence so is $u(t)-\delta |t|^2$.
Finally, if $y\in\bC^n\setminus\{0\}$, the function $\bR\ni \lambda\mapsto u(\lambda y)$ is even and strongly 
convex, hence
$du(\lambda y)/d\lambda\vert_{\lambda=1}>0$. Thus $y$ is a regular point of $u$. Therefore $\{y:u(y)<c\}$ is a 
smoothly bounded, strongly convex, balanced domain for any $c\in(q,\infty)$. Note that $q=u(0)=\min u$.

Accordingly, we have a pseudoconvex family over $S=\{s\in\bC:|s|<2-q\}$,
\begin{equation} %4.4
X=\{(s,x)\in\bC\times\bC^n: |s|+u(x)<2\}\subset S\times\bC^n,
\end{equation}
whose fibers are in $\bK'$. If $x\in\partial Y_2 $ then $u(x)=u_2(x)=1$, which implies $x\in\partial(X[1])$. Since
$X[1]$ is convex, $X[1]=Y_2$ follows. Similarly, $X[2-p-q]=Y_1$.

By assumption the family $\Psi X$ is also pseudoconvex. Its fibers $(\Psi X)[s]$ are unit balls of certain hermitian
norms on $\bC^n$, that we will denote $h_s$. Clearly $h_s=h_{|s|}$. By Proposition 4.3 $\log h_s$ is a subharmonic
function of $s$. Since $2-p-q>1$, see (4.3), the maximum principle gives $h_1\le h_{2-p-q}$, or
\[
\Psi(Y_1)=(\Psi X)[2-p-q]\subset(\Psi X)[1]=\Psi(Y_2),\qquad\text{q.e.d.}
\]
\end{proof}

\begin{proof}[Proof of Theorem 4.1]
We will argue when $n=2$. 
Suppose a $\GL_2(\bC)$ invariant $\Psi:\bK'\to\bE_0$ preserves pseudoconvexity of families. 
Let $D=\{(\xi,\eta)\in\bC^2: |\xi|,|\eta|<1\}$ be the bidisc and consider the 
operator $A$ given by
\[
A(\xi,\eta)=\left(\frac{3\xi}4+\frac\eta5,\frac{3\xi}4-\frac\eta5\right),\qquad (\xi,\eta)\in\bC^2.
\]
Thus $\overline{AD}\subset D$. We can find $Y\in\bK'$ so close to $D$ that $\overline{AY}\subset Y$. By
averaging, we can even arrange that $Y$ is invariant under the transformations $(\xi,\eta)\mapsto(\eta,\xi)$ and
$(\xi,\eta)\mapsto$ $(e^{i\sigma}\xi,e^{i\tau}\eta)$, $\sigma,\tau\in\bR$. It follows from equivariance that $\Psi(Y)$ is 
also invariant, hence a ball, say, $rB$. Lemma 4.2 gives $A(rB)=A(\Psi(Y)=\Psi(AY)\subset\Psi(Y)=rB$, 
a contradiction, since for any $\xi\in\bC$ we have with Euclidean length
$||\,\,||$
\[
||A(\xi,0)||=\left\Vert\left(\frac{3\xi}4,\frac{3\xi}4\right)\right\Vert=\sqrt\frac{18}{16}|\xi|>||(\xi,0)||.
\] 
\end{proof}

In fact, the pseudoconvex family $X$ of Theorem 4.1 can be chosen independently of $\Psi$. One can use the domain
$Y\in\bK'$ in the proof of the theorem and $Y_1=AY$, $Y_2=Y$ to construct the pseudoconvex family
$X$ of (4.4). Yet $\Psi X$ cannot be 
pseudoconvex, for if it were, $\Psi(Y_1)=A(rB)$ would be contained in $\Psi(Y_2)=rB$.

\end{document}